\theoremstyle{plain}
\newtheorem{theorem}{Theorem}
\newtheorem{lemma}{Lemma}
\newtheorem{proposition}{Proposition} 
\newtheorem{definition}{Definition}
\newtheorem{corollary}{Corollary}
\theoremstyle{definition}
\newtheorem{remark}{Remark}
\newcommand{\rmd}{\mathrm{d}}
\newcommand{\bbE}{\mathbb{E}}
\newcommand{\bbR}{\mathbb{R}}
\journal{International Journal of Forecasting}
\newcommand*{\citetnp}[1]{\citeauthor{#1}, \citeyear{#1}}
\begin{document}
\newcommand{\tnotemarksymbol}{\dag}
\begin{frontmatter}
	
	\title{Distributional regression and its evaluation with the CRPS : bounds and convergence of the minimax risk\\
    \small \textcolor{blue}{\textit{The following article is a preprint version of the article. You can find the latest version of the article here :} \url{https://doi.org/10.1016/j.ijforecast.2022.11.001}.}}
	
	\author[lmb]{Romain Pic\corref{correspondingauthor}}
	\cortext[correspondingauthor]{Corresponding author}
	\ead{romain.pic@univ-fcomte.fr}
	\author[lmb]{Clément Dombry}
	\author[lsce]{Philippe Naveau}
	\author[cnrm,meteofr]{Maxime Taillardat}
	
	\date{01/12/2022}

	\address[lmb]{Laboratoire de Mathématiques de Besançon, CNRS UMR 6623, Univ. Bourgogne Franche-Comté, Besançon, France}
	\address[lsce]{Laboratoire des Sciences du Climat et de l'Environnement, UMR 8212, CEA-CNRS-UVSQ, IPSL \& U Paris-Saclay, Gif-sur-Yvette, France}
	\address[cnrm]{CNRM, Université de Toulouse, Météo-France, CNRS, Toulouse, France}
	\address[meteofr]{Météo-France, Toulouse, France}

	\begin{abstract}
		The theoretical advances on the properties of scoring rules over the past decades have broadened the use of scoring rules in probabilistic forecasting. In meteorological forecasting, statistical postprocessing techniques are essential to improve the forecasts made by deterministic physical models. Numerous state-of-the-art statistical postprocessing techniques are based on distributional regression evaluated with the Continuous Ranked Probability Score (CRPS). However, theoretical properties of such evaluation with the CRPS have solely considered the unconditional framework (i.e. without covariates) and infinite sample sizes. We extend these results and study the rate of convergence in terms of CRPS of distributional regression methods. We find the optimal minimax rate of convergence for a given class of distributions and show that the $k$-nearest neighbor method and the kernel method reach this optimal minimax rate.
	\end{abstract}
	
	\begin{keyword}
	Probabilistic Forecasting, Distributional Regression, CRPS, Minimax Rate of Convergence, Nearest Neighbor Method, Kernel Method.
	\end{keyword}

\end{frontmatter}

\section{Introduction}

In meteorology, ensemble forecasts are based on a given number of deterministic models whose parameters vary slightly in order to consider observation errors and incomplete physical representation of the atmosphere. This leads to an ensemble of different forecasts that overall also assess the uncertainty of the forecast. Ensemble forecasts suffer from bias and underdispersion \citep{Hamill_1997,Baran_2018} and need to be statistically postprocessed in order to be improved. Different postprocessing methods have been proposed, such as Ensemble Model Output Statistics \citep{Gneiting_2005},  Quantile Regression Forests \citep{Taillardat_2016} or Neural Networks \citep{Schulz_2022}. These references, among other, also discuss the stakes of weather forecast statistical postprocessing.\\

Postprocessing methods rely on distributional regression \citep{Gneiting_2014} where the aim is to predict the conditional distribution of the  quantity of interest (e.g. temperatures, wind-speed, or precipitation) given  a set of covariates (e.g. raw outputs of a physical ensemble model). Algorithms are often based on the minimization of a proper scoring rule that compares actual observations with the predictive distribution. Scoring rules can be seen as an equivalent of loss functions in classical regression. A detailed review of scoring rules is given by \citet{Gneiting_2007}. The Continuous Ranked Probability Score (CRPS; \citetnp{Matheson_1976}), defined in Equation \eqref{eq:CRPS_def}, is one of the most popular scores in meteorological forecasting. The CRPS is also minimized to infer parameters of statistical models used in postprocessing (e.g. \citetnp{Gneiting_2005}; \citetnp{Naveau_2016}; \citetnp{Rasp_2018}; \citetnp{Taillardat_2019}). Recently, under monotonicity assumptions, the isotonic distributional regression \citep {Henzi_2021} was shown to minimize the in-sample CRPS and to satisfy  consistency in the sense of Kolmogorov distance.\\

To the best of our knowledge, most convergence statements in distributional regression (e.g. \citetnp{Thorey_2017} and \citetnp{Mosching_2020}) are not only derived within an unconditional framework, i.e. without taking into account the covariates, but also these limiting results assume arbitrarily large sample sizes. In this work, our goal is to bypass these two limitations.\\

This paper is organized as follows. Section \ref{section:prelim} introduces preliminary notions that are needed to state our main results  in Section~\ref{section:main_res}. Section~\ref{subsection:distrib_regression} introduces our framework and notation for distributional regression. 
Section~\ref{subsection:CRPS_distrib} provides the theoretical background on distributional regression and its evaluation using the CRPS and Section~\ref{subsection:prelim-minimax} provides some elements on minimax risk theory. Section \ref{subsection:knn_kernel_distrib} briefly introduces the two models that are studied in this article: the $k$-nearest neighbor and kernel estimators.  The main result on minimax rate of convergence for distributional regression is stated in Section~\ref{subsection:optimal-minimax} where suitable classes of distributions $\mathcal{D}^{(h,C,M)}$ are defined. In Section \ref{subsection:knn}, we study the $k$-NN estimators and derive a non-asymptotic upper bound for the excess risk of the CRPS uniformly on the class $\mathcal{D}^{(h,C,M)}$. Section \ref{subsection:kernel} provides similar results for the kernel method. In Section \ref{subsection:lower-minimax}, we find a lower minimax rate of convergence  by reducing the problem to standard point regression solved by \citet{Gyorfi_2002}. We can deduce that the $k$-NN  method for the distributional regression reaches the optimal rate of convergence in dimension $d\geq2$, while the kernel method reaches the optimal rate of convergence in any dimension. All the proofs are postponed to and detailed in the Appendix. A short conclusion and discussion is provided in Section~\ref{section:conclusion}.

\section{Preliminaries}\label{section:prelim}

\subsection{Distributional regression framework}\label{subsection:distrib_regression}
In this article, we consider the regression framework $(X,Y)\in\bbR^d\times\bbR$ with distribution $P$. The goal of distributional regression is to estimate the conditional distribution of $Y$ given $X=x$, noted  
\[
F^\ast_x(y)\colon=P(Y\leq y|X=x),\quad x\in\bbR^d.
\]

In forecast assessment, we make the distinction between the construction of the estimator relying on the training sample $D_n=\{(X_i,Y_i),1\leq i\leq n\}$ and its evaluation with respect to new data $(X,Y)$.  Given  the training sample $D_n$, the forecaster constructs a predictor $\hat F_n:x\mapsto \hat F_{n,x}$ that estimates the conditional distribution $F^\ast_x$. In this context, it is crucial to assess if $\hat F_{n,x}$ is close to $F^\ast_x$ over the entire range of possible values of $X=x$. To this aim, we consider
\begin{equation}\label{eq:l2-distance}
	\bbE_{X\sim P_X,D_n\sim  P^n }\left[\int_\bbR | \hat F_{n,X}(z)-F^\ast_X(z)|^2 \rmd z\right]
\end{equation}
where  $P_X$ denotes the marginal distribution of $X$, $\bbE_{X\sim P_X,D_n\sim  P^n }$ denotes the expectation with respect to $X$ and $D_n$ following $P_X$ and $P^n$ respectively. The squared $L^2$-norm within the expectation is usually referred to as the squared second-order \textit{Cram\'er's distance}. We focus on this specific distance because it corresponds to the excess risk associated with the CRPS, also called \textit{divergence} of the CRPS, as explained in the next section.

\subsection{CRPS and evaluation of distributional regression}\label{subsection:CRPS_distrib}

    The Continuous Ranked Probability Score (CRPS; \citetnp{Matheson_1976}) compares a predictive distribution $F$ and a real-valued observation $y$ by computing the following integral
    \begin{equation}\label{eq:CRPS_def}
    	\mathrm{CRPS}(F,y)=\int_{\mathbb{R}}(F(z)-\mathds{1}_{y\leq z})^2\rmd z.
    \end{equation}
    The expected $\mathrm{CRPS}$ of a predictive distribution $F$ when the observations $Y$ are distributed according to $G$ is defined as
    \begin{equation}\label{eq:expected_CRPS}
    	\overline{\mathrm{CRPS}}(F,G)=\int_\bbR \mathrm{CRPS}(F,y) G(\rmd y),\quad F,G\in\mathcal{M}(\mathbb{R}),
    \end{equation}
    where $\mathcal{M}(\mathbb{R})$ denotes the set of all distribution functions on $\bbR$. This quantity is finite when both $F$ and $G$ have a finite first moment. Then, the difference between the expected CRPS of the forecast $F$ and the expected $\mathrm{CRPS}$  of the ideal forecast $G$ can be written as
    \begin{equation}\label{eq:div_CRPS}
    	\overline{\mathrm{CRPS}}(F,G)-\overline{\mathrm{CRPS}} (G,G) = \int_\bbR |F(z)-G(z)|^2 \rmd z \geq 0.
    \end{equation}
    This implies that the only optimal prediction, in the sense that it  minimizes the expected CRPS, is the true distribution $G$. A score with this property is said to be \textit{strictly proper}. This property is essential for distributional regression as it justifies the minimization of the expected score in order to construct or evaluate a prediction.\\
  
    In distributional regression, the quality of a predictor $\hat F:x\mapsto \hat F_x$ is assessed by its risk 
    \begin{align*}
    	R_P(\hat F) &= \bbE_{(X,Y)\sim P}\left[\mathrm{CRPS}(\hat F_X,Y)\right]\\
    	&= \bbE_{X\sim P_X} \left[\overline{\mathrm{CRPS}}(\hat F_X,F_X^\ast) \right].
    \end{align*}
    This quantity is important as many distributional regression methods try to minimize it in order to improve predictions. When $Y$ is integrable, Equation~\eqref{eq:div_CRPS} implies
    \begin{align}
    	R_P(\hat F) - R_P(F^\ast) &= \bbE_{(X,Y)\sim P}\left[\mathrm{CRPS}(\hat F_X,Y)-\mathrm{CRPS}( F^\ast_X,Y)\right]\nonumber\\
    	&= \bbE_{X\sim P_X}\left[\int_\bbR \left|\hat F_X(z)-F^\ast_X(z)\right|^2 \rmd z \right]\geq 0. \label{eq:bayes}
    \end{align}
    We recall that the Bayes risk is the minimal theoretical risk over all possible predictors and that a Bayes predictor is a predictor achieving the Bayes risk. Thus, Equation~\eqref{eq:bayes} implies that $R_P(F^\ast)$ is the Bayes risk and that $F^\ast$ is a Bayes predictor if and only if $\hat F_x=F^\ast_x$ $P_X$-a.e. An introduction to the notions of theoretical risk, Bayes risk and excess risk can be found in Section~2.4 of \citet{Hastie_2009}.
    
    Finally, we consider the case of a predictor $\hat F_n$ built on a training sample $D_n=\{(X_i,Y_i),1\leq i\leq n\}$, as presented in Section~\ref{subsection:distrib_regression}, to estimate the conditional distribution of $Y$ given $X$. Then, $(X,Y)$ denotes a new independent observation used to evaluate the performances of $\hat F_n$. The predictor has the expected $\mathrm{CRPS}$ 
    \begin{equation*}
    	\bbE_{D_n\sim P^n}[R_P(\hat F_n)]= \bbE_{D_n\sim P^n,(X,Y)\sim P}[\mathrm{CRPS}(\hat F_{n,X}, Y)],
    \end{equation*}
    with expectation taken both with respect to the training sample $D_n$ and test observation $(X,Y)$.
    Once again, when $Y$ is integrable,  the theoretical risk has a unique minimum given by $R_P(F^\ast)$. The \textit{excess risk} becomes
    \begin{align}
    	&\quad \bbE_{D_n\sim P^n}\left[R_P(\hat F_n)\right]-R_P(F^\ast) \nonumber \\
    	&= \bbE_{D_n\sim P^n,X\sim P_X}\left[\int_\bbR \left|\hat F_{n,X}(z)-F_X^\ast(z)\right|^2\rmd z\right]\geq 0.\label{eq:excess_risk}
    \end{align}
    This justifies the choice of the squared Cram\'er's distance in Equation~\eqref{eq:l2-distance}.
    
     For large sample sizes, one expects that the predictor correctly estimates the conditional distribution and that the excess risk \eqref{eq:excess_risk} tends to zero.  A genuine question is to investigate the rate of convergence of the excess risk to zero as the sample size $n\to\infty$. The risk depends on the distribution of observations and we want the model to perform well on large classes of distributions. Hence, we consider the standard minimax approach, as described in the next section.
     
    \subsection{Optimal minimax rates of convergence}\label{subsection:prelim-minimax}
    In order to study the rate of convergence,  as $n\to\infty$, of the excess risk \eqref{eq:excess_risk} to zero, we introduce the notion of \textit{optimal minimax rate of convergence}. The minimax risk corresponds to the best achievable risk in the worst-case scenario (whence the name minimax). More precisely,  given a class of distributions $\mathcal{D}$,  the optimal minimax rate of convergence quantifies the minimal error that an estimator $\hat F_n$ can achieve uniformly on a given class of distributions $\mathcal{D}$,  when the size of the training set $D_n$ gets large.
    
    \citet{Stone_1982} provided minimax rates of convergence within a point regression framework and the minimax theory for nonparametric regression is well-developed, see e.g. \cite{Gyorfi_2002} or \cite{Tsybakov_2009}. To the extent of our knowledge, this paper states the first results for distributional regression.

    The formal definition of minimax rate of convergence for distributional regression is as follows.
    \begin{definition}
    	A sequence of positive numbers $(a_n)$ is called an optimal minimax rate of convergence on the class $\mathcal{D}$ if
    	\begin{equation}\label{eq:lowerbound}
    		\liminf_{n\to\infty} \inf_{\hat F_n} \sup_{P\in \mathcal{D}} \cfrac{\bbE_{D_n\sim P^n}[R_P(\hat F_n)]-R_P(F^\ast)}{a_n}>0
    	\end{equation}
    	and
    	\begin{equation}\label{eq:upperbound}
    		\limsup_{n\to\infty} \inf_{\hat F_n} \sup_{P\in \mathcal{D}} \cfrac{\bbE_{D_n\sim P^n}[R_P(\hat F_n)]-R_P(F^\ast)}{a_n}<\infty,
    	\end{equation}
    	where the infimum is taken over all distributional regression models $\hat F_n$ trained on $D_n$. If the sequence $(a_n)$ satisfies only the lower bound~\eqref{eq:lowerbound}, it is called a lower minimax rate of convergence.
    \end{definition}
    
    \subsection{$k$-NN and kernel predictors in distributional regression}\label{subsection:knn_kernel_distrib}

    Many predictors $\hat F_{n}$ can be studied and possibly achieve the optimal minimax rate of convergence. In this paper, we focus on two simple cases: $k$-nearest neighbor and kernel estimators.
    
    The $k$-nearest neighbor ($k$-NN) method is well-known in the classical framework of regression and classification (see, e.g. \citetnp{Biau_2015}). In distributional regression, the $k$-NN method can be suitably adapted to estimate the conditional distribution $F^\ast_x$ and the estimator is written as
    \begin{equation}\label{eq:knn-model}
    	\hat F_{n,x}(z)= \frac{1}{k_n}\sum_{i=1}^{k_n} \mathds{1}_{Y_{i:n}(x)\leq z},
    \end{equation}
    where $1\leq k_n\leq n$ and $Y_{i:n}(x)$ denotes the observation at the $i$-th nearest neighbor of $x$. As usual, possible ties are broken at random to define nearest neighbors. Note that, in weather forecast statistical postprocessing, the $k$-NN method corresponds to a type of analog ensemble method (see \citetnp{Delle_Monache_2013}).
    
    The kernel estimate in distributional regression (see, e.g. Chapter 5 of \citetnp{Gyorfi_2002}) can be expressed as
    \begin{equation}\label{eq:kernel-gen}
    	\hat F_{n,x}(z)= \cfrac{\sum_{i=1}^{n} K(\frac{x-X_i}{h_n})\mathds{1}_{Y_i\leq z}}{\sum_{i=1}^{n}K(\frac{x-X_i}{h_n})},
    \end{equation}
    where the function  $K:\bbR^d \to[0, \infty)$ is a density function, called kernel, and $h_n>0$ is the so-called bandwidth, that depends on the sample size $n$. If the denominator in \eqref{eq:kernel-gen} vanishes, we use the convention  $\hat F_{n,x}(z)=\frac{1}{n}\sum_{i=1}^n \mathds{1}_{Y_i\leq z}$.
    
    Minimax rates of convergence of the $k$-NN and kernel models in point regression are well-studied and it is known that, for  suitable choices of number of neighbors $k_n$ and bandwidth $h_n$ respectively, the methods are minimax rate optimal on classes of distributions with Lipschitz or more generally Hölder continuous regression functions (see e.g. Theorem 14.5 in \citetnp{Biau_2015} and Theorem 5.2 in \citetnp{Gyorfi_2002}). For suitable classes of distributions defined hereafter, we are able to extend these results to distributional regression. Moreover, we obtain non-asymptotic bounds for the minimax rate of convergence for both the $k$-NN and kernel models (see Sections \ref{subsection:knn} and \ref{subsection:kernel}).

   \section{Main results}\label{section:main_res}
   \subsection{Optimal minimax rate of convergence}\label{subsection:optimal-minimax}
 We consider the following classes of distributions. 
    \begin{definition}\label{def:classD}
    	For $h\in (0,1]$, $C>0$ and $M>0$, let $\mathcal{D}^{(h,C,M)}$ be the class of distributions $P$ such that $F^\ast_x(y)=P(Y\leq y|X=x)$ satisfies:
    	\begin{enumerate}
    		\item[i)] $X\in[0,1]^d$ $P_X$-a.s.;
    		\item[ii)] For all $x\in[0,1]^d$,  $\int_\bbR F^\ast_x(z)(1-F^\ast_x(z))\rmd z\leq M$;
    		\item[iii)] $\lVert F^\ast_{x'}- F^\ast_{x}\rVert_{L^2}\leq C \lVert x'-x\rVert^h$ for all $x,x' \in [0,1]^d$.
    	\end{enumerate}
    \end{definition}
    \noindent
    Conditions $i)-iii)$ in Definition~\ref{def:classD} are very similar to the conditions considered in the point regression  framework, see Theorem 5.2 in \cite{Gyorfi_2002}. In condition $i)$, $[0,1]^d$ could be replaced by any compact set of $\mathbb{R}^d$. Condition $ii)$ requires that $\overline{\mathrm{CRPS}}(F^*_x,F^*_x)$ remains uniformly bounded by $M$, which is a condition on the dispersion of the distribution $F^*_X$ since it implies that the absolute mean error (MAE) remains uniformly bounded. Condition $iii)$ is a regularity statement of the conditional distribution in the space $L^2(\mathbb{R})$.
    As an illustration, the different conditions are expressed for the Generalized Pareto distribution model in Section~\ref{subsection:example_GPD} below.
    
    \smallskip

    Our main result is the following optimal minimax rate of convergence.
        \begin{theorem}\label{th:optimal}
    	The sequence $a_n=n^{-\frac{2h}{2h+d}}$ is the optimal minimax  rate of convergence on the class $\mathcal{D}^{(h,C,M)}$.
    \end{theorem}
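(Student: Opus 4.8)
The plan is to derive Theorem~\ref{th:optimal} simply by assembling the lower bound of Proposition~\ref{pp:lower_bound} with the upper bounds of Corollaries~\ref{cor:knn} and~\ref{cor:kernel}: by the definition of an optimal minimax rate it suffices to check that $a_n=n^{-2h/(2h+d)}$ satisfies both~\eqref{eq:lowerbound} and~\eqref{eq:upperbound} on the class $\mathcal{D}^{(h,C,M)}$, and each of these is essentially one of the already established results once the constants and the class inclusions are handled. So I do not expect any genuine analytic obstacle here; the work is bookkeeping.

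For the lower bound~\eqref{eq:lowerbound}, I would start from the observation (already noted before Proposition~\ref{pp:lower_bound}) that a binary response $Y\in\{0,L\}$ automatically satisfies condition~$ii)$ of Definition~\ref{def:classD} with any $M\ge L/4$. Choosing $L=4M$ (any $L\le 4M$ works) gives $\mathcal{B}^{(h,C,L)}\subset\mathcal{D}^{(h,C,M)}$, so by the monotonicity inequality stated just before Proposition~\ref{pp:lower_bound} the minimax excess of risk over $\mathcal{D}^{(h,C,M)}$ dominates the one over $\mathcal{B}^{(h,C,L)}$. Plugging this into~\eqref{eq:lower_bound_precise} yields
\[
\liminf_{n\to\infty}\inf_{\hat F_n}\sup_{P\in\mathcal{D}^{(h,C,M)}}\frac{\bbE_{D_n\sim P^n}[R_P(\hat F_n)]-R_P(F^\ast)}{a_n}\;\ge\; C_1\,C^{\frac{2d}{2h+d}}\;>\;0,
\]
which is exactly~\eqref{eq:lowerbound}.

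For the upper bound~\eqref{eq:upperbound}, I would invoke Corollary~\ref{cor:kernel}, which applies in every dimension $d\ge1$ (for $d\ge2$ one could instead use the $k$-NN bound of Corollary~\ref{cor:knn}). For each $P\in\mathcal{D}^{(h,C,M)}$ the kernel estimator~\eqref{eq:kernel-model} with the bandwidth prescribed in Corollary~\ref{cor:kernel} has excess of risk at most $B_n\,a_n$, where the constant $B_n$ depends on $n$ only through the term $M/n$; hence $B_n$ is bounded in $n$ and converges to a finite limit $B_\infty$. Since the bound is uniform over $P\in\mathcal{D}^{(h,C,M)}$, taking $\sup_P$ and then $\inf_{\hat F_n}$ gives
\[
\limsup_{n\to\infty}\inf_{\hat F_n}\sup_{P\in\mathcal{D}^{(h,C,M)}}\frac{\bbE_{D_n\sim P^n}[R_P(\hat F_n)]-R_P(F^\ast)}{a_n}\;\le\;B_\infty\;<\;\infty,
\]
which is~\eqref{eq:upperbound}. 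Combining the two displays shows that $a_n$ is the optimal minimax rate of convergence on $\mathcal{D}^{(h,C,M)}$.

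The only points requiring a little care are (i) selecting $L$ compatibly with $M$ so that the smaller binary class $\mathcal{B}^{(h,C,L)}$ embeds in $\mathcal{D}^{(h,C,M)}$, and (ii) checking that the $n$-dependence of the constant in Corollary~\ref{cor:kernel} (through $M/n$) is harmless for the $\limsup$. It is also worth remarking, as a sanity check, that $a_n$ depends only on the smoothness index $h$ and the dimension $d$ and not on the constants $C$ or $M$, in line with the classical point-regression picture of~\cite{gyorfi}.
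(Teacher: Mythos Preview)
Your proposal is correct and follows exactly the route the paper takes: the theorem is obtained by combining Proposition~\ref{pp:lower_bound} (via the inclusion $\mathcal{B}^{(h,C,L)}\subset\mathcal{D}^{(h,C,M)}$ for $L\le 4M$) with Corollary~\ref{cor:kernel} (valid in all dimensions) or Corollary~\ref{cor:knn} (for $d\ge 2$). Your handling of the two bookkeeping points---the choice of $L$ and the harmless $M/n$ dependence of the constant in Corollary~\ref{cor:kernel}---is more explicit than what the paper actually writes, but the argument is the same.
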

    
    It should be stressed that the rate of convergence $n^{-\frac{2h}{2h+d}}$ is the same as in point regression with square error, see  Theorems 3.2 and 5.2 in \citet{Gyorfi_2002} for the lower bound and upper bound, respectively.
    
    \begin{remark}
        As pointed out by a referee, conditions \textit{i)} and \textit{iii)}  together with the integrability of $Y$ imply condition \textit{ii)} for some $M>0$. However, the dispersion, as measured by $M$, plays an important role throughout the proofs and, for this reason, we keep condition \textit{ii)} in order to obtain bounds as tight as possible.
    \end{remark}

    The proof of Theorem~\ref{th:optimal}  is divided into three steps:
    \begin{enumerate}
    	\item We provide in Section \ref{subsection:knn} an explicit and non-asymptotic upper bound for the excess risk of the $k$-nearest neighbor model uniformly on the class $\mathcal{D}^{(h,C,M)}$; the upper bound is then optimized with  a suitable choice of $k=k_n$.
    	\item In Section \ref{subsection:kernel}, we obtain similar results for the kernel model.
    	\item We show in Section \ref{subsection:lower-minimax} that $a_n=n^{-\frac{2h}{2h+d}}$ is a lower minimax rate of convergence; the main argument is that it is enough to consider a binary model when both the observation $Y$ and prediction $\hat F_X$ take values in $\{0,L\}$; we deduce that in this case, the $\mathrm{CRPS}$ coincides with the mean squared error so that we can appeal to standard results on lower minimax rate of convergence for regression.
    \end{enumerate}
    Combining these three steps, we finally obtain Theorem~\ref{th:optimal} providing the optimal minimax rate of convergence of the excess risk on the class $\mathcal{D}^{(h,C,M)}$.
    All the proofs are postponed to the Appendix.

\subsection{Upper bound for the k-nearest neighbor model}\label{subsection:knn}

    The  $k$-NN method for distributional regression is defined in Equation \eqref{eq:knn-model}. Here we do not use only the mean of the nearest neighbor sample $(Y_{i:n}(x))_{1\leq i\leq k_n}$ but its entire empirical distribution. Interestingly, the tools developed to analyze the $k$-NN in point regression can be used in our distributional regression framework.
    
    \begin{proposition}\label{pp:upper_bound_knn}
    	Assume $P\in\mathcal{D}^{(h,C,M)}$ and let $\hat F_n$ be the $k$-nearest neighbor model defined by Equation~\eqref{eq:knn-model}. Then,
    	\begin{equation*}
    		\bbE_{D_n\sim P^n}[R_P(\hat F_{n})]-R_P(F^\ast) \leq \begin{cases}
    			8^{h} C^2  \left(\cfrac{k_n}{n}\right)^{h}+\cfrac{M}{k_n}& \mbox{if } d=1, \\
    			{c_d}^{h} C^2  \left(\cfrac{k_n}{n}\right)^{2h/d}+\cfrac{M}{k_n}& \mbox{if } d\geq 2,
    		\end{cases}
    	\end{equation*}
    	where $c_d=\frac{2^{3+\frac{2}{d}}(1+\sqrt{d})^2}{V_d^{2/d}}$ and $V_d$ is the volume of the unit  ball in $\bbR^d$.
    \end{proposition}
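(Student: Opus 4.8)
The plan is to run, uniformly in $z$, the classical bias--variance analysis of $k$-nearest-neighbour regression. By \eqref{eq:excess_risk} the excess of risk equals $\bbE_{D_n,X}[\int_\bbR|\hat F_{n,X}(z)-F^\ast_X(z)|^2\,\rmd z]$, and for each fixed $z\in\bbR$ the estimate $\hat F_{n,x}(z)=\frac1{k_n}\sum_{i=1}^{k_n}\mathds{1}_{Y_{i:n}(x)\le z}$ is exactly the $k$-NN regression estimate of the function $x\mapsto F^\ast_x(z)=\bbE[\mathds{1}_{Y\le z}\mid X=x]$ built from the data $(X_i,\mathds{1}_{Y_i\le z})_{1\le i\le n}$. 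First I would fix $z$ and write $\hat F_{n,X}(z)-F^\ast_X(z)=A_z+B_z$ with noise term $A_z=\frac1{k_n}\sum_{i=1}^{k_n}(\mathds{1}_{Y_{i:n}(X)\le z}-F^\ast_{X_{i:n}(X)}(z))$ and approximation term $B_z=\frac1{k_n}\sum_{i=1}^{k_n}(F^\ast_{X_{i:n}(X)}(z)-F^\ast_X(z))$.

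Conditioning on the $\sigma$-field $\mathcal G$ generated by $X$, $X_1,\dots,X_n$ and the tie-breaking randomisation fixes the nearest-neighbour permutation, so $B_z$ is $\mathcal G$-measurable while the variables $Y_{i:n}(X)$, $1\le i\le k_n$, are $\mathcal G$-conditionally independent with conditional distribution functions $F^\ast_{X_{i:n}(X)}$. Hence $\bbE[A_z\mid\mathcal G]=0$, the cross term vanishes, $\bbE[(\hat F_{n,X}(z)-F^\ast_X(z))^2]=\bbE[A_z^2]+\bbE[B_z^2]$, and $\bbE[A_z^2\mid\mathcal G]=k_n^{-2}\sum_{i=1}^{k_n}F^\ast_{X_{i:n}(X)}(z)(1-F^\ast_{X_{i:n}(X)}(z))$. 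Integrating in $z$ (Tonelli) and applying condition $ii)$ of Definition~\ref{def:classD} at each point $X_{i:n}(X)\in[0,1]^d$ bounds the variance part by $k_n^{-2}\sum_{i=1}^{k_n}M=M/k_n$. For the bias part, Jensen's inequality gives $B_z^2\le k_n^{-1}\sum_{i=1}^{k_n}(F^\ast_{X_{i:n}(X)}(z)-F^\ast_X(z))^2$; integrating in $z$, using condition $iii)$, and dominating each neighbour distance by the largest one (the $k_n$-th nearest-neighbour distance) gives $\int_\bbR\bbE[B_z^2]\,\rmd z\le C^2\,\bbE[\lVert X_{k_n:n}(X)-X\rVert^{2h}]$. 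Altogether the excess of risk is at most $C^2\,\bbE[\lVert X_{k_n:n}(X)-X\rVert^{2h}]+M/k_n$.

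It then remains to control $\bbE[\lVert X_{k_n:n}(X)-X\rVert^{2h}]$ for a query drawn from $P_X$, supported in $[0,1]^d$; this is where the genuine work lies and where the dimension-dependent constants appear. For $d\ge2$ I would first note that $u\mapsto u^h$ is concave ($h\le1$), so $\bbE[\lVert X_{k_n:n}(X)-X\rVert^{2h}]\le\left(\bbE[\lVert X_{k_n:n}(X)-X\rVert^2]\right)^h$, and then invoke the standard bound $\bbE[\lVert X_{k_n:n}(X)-X\rVert^2]\le c_d(k_n/n)^{2/d}$ for compactly supported covariates: partitioning $[0,1]^d$ into $\lceil1/\delta\rceil^d$ subcubes of side $\delta$, one has $\lVert X_{k_n:n}(X)-X\rVert\le\sqrt d\,\delta$ as soon as $X$'s own subcube carries at least $k_n$ sample points, so a Chernoff bound on the binomial subcube counts followed by an optimisation over $\delta$ gives the claim, with the constant $c_d=\frac{2^{3+2/d}(1+\sqrt d)^2}{V_d^{2/d}}$ coming out of this argument; raising to the power $h$ produces the $d\ge2$ term. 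For $d=1$ the squared-distance bound (exponent $2/d=2$) fails in general, so the argument does not carry over; instead I would use $\lVert X_{k_n:n}(X)-X\rVert\le1$ to pass to the smaller power, $\lVert X_{k_n:n}(X)-X\rVert^{2h}\le\lVert X_{k_n:n}(X)-X\rVert^h$, and then a direct one-dimensional order-statistics estimate (dominating the $k_n$-nearest-neighbour distance by a block of consecutive sample spacings) giving $\bbE[\lVert X_{k_n:n}(X)-X\rVert^h]\le(8k_n/n)^h$ — e.g.\ via $\bbE[\lVert X_{k_n:n}(X)-X\rVert]\le 8k_n/n$ and Jensen. Adding $M/k_n$ in each case yields the two stated inequalities. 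The main obstacle is precisely this constant-tracking control of the $k_n$-nearest-neighbour distance, together with the reason the case $d=1$ must be handled separately; the remainder is the routine $k$-NN bias--variance bookkeeping, now carried out uniformly over $z$ and closed using conditions $ii)$ and $iii)$.
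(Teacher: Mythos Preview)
Your bias--variance decomposition is exactly the paper's: condition on the covariates (your $\mathcal G$), use the conditional independence of the $Y_{i:n}(x)$ to get the Bernoulli variance, apply Jensen to the squared bias, integrate in $z$, and close with conditions $ii)$ and $iii)$ to reach
\[
\bbE_{D_n}[R_P(\hat F_n)]-R_P(F^\ast)\le C^2\,\bbE\big[\lVert X_{k_n:n}(X)-X\rVert^{2h}\big]+\frac{M}{k_n}.
\]
This part is correct and matches the paper line for line.

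The only divergence is in how you finish. The paper does \emph{not} treat $d=1$ by a separate mechanism: it simply cites Theorem~2.4 of \cite{biau_2015}, which states
\[
\bbE\big[\lVert X_{k_n:n}(X)-X\rVert^{2}\big]\le
\begin{cases}
8\,k_n/n & d=1,\\
c_d\,(k_n/n)^{2/d} & d\ge 2,
\end{cases}
\]
and then applies the \emph{same} concavity inequality $\bbE[U^{2h}]\le(\bbE[U^2])^h$ in both cases. Your belief that ``the squared-distance bound fails in general'' for $d=1$ is a misreading: the second-moment bound does not fail, it merely has the rate $k_n/n$ rather than $(k_n/n)^{2}$, and that is precisely what produces the $d=1$ line of the proposition after raising to the power~$h$. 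Your detour via $\lVert\cdot\rVert^{2h}\le\lVert\cdot\rVert^{h}$ and a first-moment bound $\bbE[\lVert X_{k_n:n}(X)-X\rVert]\le 8k_n/n$ is unnecessary, and the latter inequality is not what Biau--Devroye state (their bound is on the \emph{second} moment), so it would need its own proof. Drop the special $d=1$ treatment and use the second-moment bound plus concavity uniformly; the argument then coincides with the paper's.
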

    
    Let us stress that the upper bound is non-asymptotic and holds for all fixed $n$ and $k_n$. Optimizing the upper bound in $k_n$ yields the following corollary. 
    
    \begin{corollary} \label{cor:knn}
    	Assume $P\in\mathcal{D}^{(h,C,M)}$ and consider the $k$-NN model~\eqref{eq:knn-model}.
    	\begin{itemize}
    		\item For $d=1$, the optimal choice
    		$k_n=\left(\cfrac{M}{hC^28^h}\right)^{\frac{1}{h+1}} n^{\frac{h}{h+1}}$
    		yields
    		\begin{equation*}
    			\bbE_{D_n\sim P^n}[R_P(\hat F_n)]-R_P(F^\ast) \leq B n^{-\frac{h}{h+1}}
    		\end{equation*}
    		with constant $B=C^\frac{2}{h+1} M^{\frac{h}{h+1}} 8^{\frac{h}{h+1}} \left(h^{-\frac{h}{h+1}} +  h^{\frac{1}{h+1}} \right)$.
    		\item For $d\geq 2$, the optimal choice $k_n=\left(\cfrac{Md}{2hC^2c_d^h}\right)^{\frac{d}{2h+d}} n^{\frac{2h}{2h+d}}$
    		yields
    		\begin{equation*}
    			\bbE_{D_n\sim P^n}[R_P(\hat F_{n})]-R_P(F^\ast)\leq 	B n^{-\frac{2h}{2h+d}}
    		\end{equation*}
    		with constant $B= (C^2c_d^h)^\frac{d}{2h+d} M^{\frac{2h}{2h+d}} \left( \left(\frac{d}{2h}\right)^{\frac{2h}{2h+d}} + \left(\frac{2h}{d}\right)^{\frac{d}{2h+d}} \right)$.\\
    	\end{itemize}
    \end{corollary}
    
    \subsection{Upper bound for the kernel model}\label{subsection:kernel}
    
    Kernel methods adapted to distributional regression are defined in Equation \eqref{eq:kernel-gen}. For convenience and simplicity, we develop our result for the simple uniform kernel $K(x)= \mathds{1}_{\{\|x\|\leq 1\}}$. However, it should be stressed that all the results can be extended to boxed kernels \citep[Figure 5.7 p73]{Gyorfi_2002} to the price of some extra multiplicative constants. For the uniform kernel, the estimator writes
    \begin{equation}\label{eq:kernel-model}
    	\hat F_{n,x}(z)= \cfrac{\sum_{i=1}^{n}  \mathds{1}_{\{\|X_i-x\|\leq h_n\}}\mathds{1}_{\{Y_i \leq z\}}}{\sum_{i=1}^{n}  \mathds{1}_{\{\|X_i-x\|\leq h_n\}}},
    \end{equation}
    when the denominator is non-zero and $\hat F_n(x)=\frac{1}{n}\sum_{i=1}^{n}\mathds{1}_{\{Y_i \leq z\}}$ otherwise.
    
    \begin{proposition}\label{pp:upper_bound_kernel}
    	
    	Assume $P \in \mathcal{D}^{(h,C,M)}$ and let $\hat F_n$ be the kernel model defined by Equation \eqref{eq:kernel-model}. Then,
    	\begin{equation*}
    		\bbE_{D_n\sim P^n}[R_P(\hat F_{n})]-R_P(F^\ast) \leq \tilde{c}_d \frac{2M+C^2d^h+\frac{M}{n}}{n h_n^d}+ C^2 h_n^{2h}
    	\end{equation*}
    	where $\tilde{c}_d$ only depends on $d$. 
    \end{proposition}
    
    Once again, the upper bound is non-asymptotic and holds for all fixed $n$ and $h_n$. Optimizing the upper bound in $h_n$ yields the following corollary. 
    
    \begin{corollary}\label{cor:kernel}
    	Assume $P\in\mathcal{D}^{(h,C,M)}$ and consider the kernel model~\eqref{eq:kernel-model}.
    	For any $d$, the optimal choice $$h_n=\left(\cfrac{\tilde{c}_d d (2M+C^2d^h+\frac{M}{n})}{2hC^2}\right)^{\frac{1}{2h+d}}  n^{-\frac{1}{2h+d}}$$
    	yields
    	\begin{equation*}
    		\bbE_{D_n\sim P^n}[R_P(\hat F_{n})]-R_P(F^\ast) \leq B n^{-\frac{2h}{2h+d}}
    	\end{equation*}
    	with  $$B=C^\frac{2d}{2h+d} \left(\tilde{c}_d (2M+C^2d^h+\frac{M}{n})\right)^{\frac{2h}{2h+d}} \left( \left(\frac{d}{2h}\right)^{-\frac{d}{2h+d}} +  \left(\frac{d}{2h}\right)^{\frac{2h}{2h+d}} \right).$$
    \end{corollary}
    
\subsection{Lower minimax rate of convergence}\label{subsection:lower-minimax}
    We finally compare the rates of convergence obtained in Corollaries~\ref{cor:knn} and~\ref{cor:kernel} with a lower minimax rate of convergence in order to see whether the optimal rate of convergence is achieved.

    To prove a lower bound on a class $\mathcal{D}$, it is always possible to consider a smaller class  $\mathcal{B}$. Indeed, if $\mathcal{B}\subset \mathcal{D}$, we clearly have 
    \begin{equation*}
    	\inf_{\hat F_n}\sup_{P\in\mathcal{B}} \Big\{\bbE_{D_n\sim P^n}[R_P(\hat F_n)]-R_P(F^\ast)\Big\}
    	\leq \inf_{\hat F_n}\sup_{P\in \mathcal{D}} \Big\{\bbE_{D_n\sim P^n}[R_P(\hat F_n)]- R_P(F^\ast)\Big\}
    \end{equation*}
    so that any lower minimax rate of convergence  on $\mathcal{B}$ is also a lower minimax rate of convergence on $\mathcal{D}$.
    
    To establish the lower minimax rate of convergence, we focus on the following classes of binary responses.
    \begin{definition}\ \\
    	Let $\mathcal{B}^{(h,C,L)}$ be the class of distributions of $(X,Y)$ such that:
    	\begin{enumerate}
    		\item[i)] $Y\in \{0,L\}$ and $X$ is uniformly distributed on $[0,1]^d$;
    		\item[ii)] $\lVert F^\ast_{x'}- F^\ast_{x}\rVert_{L^2}\leq C \lVert x'-x\rVert^h$ for all $x,x' \in [0,1]^d$.
    	\end{enumerate}
    \end{definition}
    Since a binary outcome $Y\in\{0,L\}$  satisfies  $\int_{\bbR} F^\ast_x(z)(1-F_x^\ast(z))\rmd z\leq L/4$, condition $ii)$ in Definition~\ref{def:classD} holds with $M\geq L/4$. Then  $\mathcal{B}^{(h,C,L)}\subset \mathcal{D}^{(h,C,M)}$ and the following lower bound established on the smaller class also holds on the larger class.

    \begin{proposition}\label{pp:lower_bound}
    	The sequence $a_n=n^{-\frac{2h}{2h+d}}$ is a lower minimax rate of convergence on the class $\mathcal{B}^{(h,C,L)}$. More precisely,
    	\begin{equation}\label{eq:lower_bound_precise}
    		\liminf_{n\to\infty}\inf_{\hat F_n} \sup_{P\in \mathcal{B}^{(h,C,L)}} \cfrac{\bbE_{D_n\sim P^n}[R_P(\hat F_n)]-R_P(F^\ast)}{C^{\frac{2d}{2h+d}} n^{-\frac{2h}{2h+d}}}\geq C_1 
    	\end{equation}
    	for some constant $C_1>0$ independent of $C$.
    \end{proposition}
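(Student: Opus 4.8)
\textbf{Proof plan for Proposition~\ref{pp:lower_bound}.}
The plan is to reduce the binary distributional regression problem on $\mathcal{B}^{(h,C,L)}$ to a classical nonparametric regression problem for the conditional probability $p(x):=P(Y=L\mid X=x)$, for which a sharp minimax lower bound is available. First I would make the structure of $\mathcal{B}^{(h,C,L)}$ explicit: for $P\in\mathcal{B}^{(h,C,L)}$ the conditional law of $Y$ given $X=x$ is the two-point law on $\{0,L\}$ with $P(Y=L\mid X=x)=p(x)$, so $F^\ast_x(z)=1-p(x)$ for $z\in[0,L)$ while $F^\ast_x$ equals $0$ for $z<0$ and $1$ for $z\ge L$. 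Hence $\lVert F^\ast_{x'}-F^\ast_x\rVert_{L^2}=\sqrt{L}\,|p(x')-p(x)|$, so condition $ii)$ is exactly that $p$ is $h$-Hölder on $[0,1]^d$ with constant $C/\sqrt{L}$. Conversely, any $h$-Hölder function $p\colon[0,1]^d\to[0,1]$ with constant $C/\sqrt{L}$ is realised by such a $P$ with $X\sim U([0,1]^d)$; thus the supremum over $\mathcal{B}^{(h,C,L)}$ contains the supremum over the whole Hölder ball of radius $C/\sqrt{L}$.

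Next I would convert the CRPS excess of risk into a mean-squared error for estimating $p$. Starting from identity~\eqref{eq:excess_risk} and restricting the inner integral to $z\in[0,L)$, where $F^\ast_X(z)=1-p(X)$ is constant in $z$, I use that the best $L^2$ constant approximation of a function on $[0,L)$ is its average. For any predictor $\hat F_n$ set $\hat p_n(x):=1-\tfrac1L\int_0^L \hat F_{n,x}(z)\,\rmd z$, which is a $D_n$-measurable estimator with values in $[0,1]$ since $\hat F_{n,x}$ is a distribution function. Then
\[
\int_\bbR |\hat F_{n,X}(z)-F^\ast_X(z)|^2\,\rmd z\ \ge\ \int_0^L |\hat F_{n,X}(z)-(1-p(X))|^2\,\rmd z\ \ge\ L\,(\hat p_n(X)-p(X))^2 .
\]
Taking expectations, then the supremum over $P$ and the infimum over $\hat F_n$, and using that $\hat p_n$ depends on $\hat F_n$ but not on $P$, this gives
\[
\inf_{\hat F_n}\sup_{P\in\mathcal{B}^{(h,C,L)}}\Big\{\bbE_{D_n\sim P^n}[R_P(\hat F_n)]-R_P(F^\ast)\Big\}\ \ge\ L\,\inf_{\hat p_n}\sup_{p}\bbE\big[(\hat p_n(X)-p(X))^2\big],
\]
where the right-hand supremum is over $h$-Hölder $p\colon[0,1]^d\to[0,1]$ with constant $C/\sqrt L$, with $X\sim U([0,1]^d)$ and a (bounded) binary response.

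Finally I would invoke the classical minimax lower bound for nonparametric regression with bounded response over Hölder classes — e.g.\ Theorem~3.2 in \cite{gyorfi}, or the Assouad/bump construction in \cite{Tsybakov2009} — which yields, for $n$ large and a constant $c(d,h)>0$,
\[
\inf_{\hat p_n}\sup_{p}\bbE\big[(\hat p_n(X)-p(X))^2\big]\ \ge\ c(d,h)\,\Big(\tfrac{C}{\sqrt L}\Big)^{\frac{2d}{2h+d}} n^{-\frac{2h}{2h+d}} .
\]
Plugging this into the previous display and simplifying $L\cdot (C/\sqrt L)^{2d/(2h+d)}=C^{2d/(2h+d)}L^{2h/(2h+d)}$ gives the bound \eqref{eq:lower_bound_precise} with $C_1=c(d,h)\,L^{\frac{2h}{2h+d}}$, which indeed does not depend on $C$.

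The two points requiring care — and where I expect the real work to lie — are: (a) that the cited regression lower bound is stated, or re-derived, with the displayed dependence on the Hölder constant $C/\sqrt L$ (the bump construction produces exactly this power after optimising the bump width, but one must track it), and (b) that the perturbed regression functions $p$ in that construction stay in $[0,1]$ — which holds for $n$ large since the bump heights vanish, consistently with the $\liminf$ in the statement. The reduction itself (the two displayed inequalities above) is the conceptual heart but is otherwise elementary.
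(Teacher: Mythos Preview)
Your proof is correct and follows essentially the same route as the paper: reduce the CRPS excess over $\mathcal{B}^{(h,C,L)}$ to a squared-error problem for $p(x)=P(Y=L\mid X=x)$ by projecting an arbitrary forecast onto a two-point forecast, and then invoke the classical minimax lower bound for Hölder regression from \cite{gyorfi}. Your single $L^2$-projection inequality compactly merges what the paper separates into two lemmas (Lemma~\ref{lemma:01} restricting to binary forecasts and Lemma~\ref{lemma:02} identifying the CRPS with the Brier score), and you track the Hölder constant $C/\sqrt{L}$ and the resulting factor $L^{2h/(2h+d)}$ in $C_1$ more explicitly than the paper's proof does.
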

    
    Combining Corollaries~\ref{cor:knn} and~\ref{cor:kernel} and Proposition~\ref{pp:lower_bound},  we can deduce that for $d\geq2$, the $k$-NN model reaches the minimax lower rate of convergence $a_n=n^{-\frac{2h}{2h+d}}$ for the class $\mathcal{D}^{(h,C,M)}$ and that the kernel model  reaches the minimax lower rate of convergence $a_n$ in any dimension $d$. This shows that this lower rate of convergence is in fact the optimal rate of convergence and proves Theorem~\ref{th:optimal}.
    
\subsection{Generalized Pareto distributions}\label{subsection:example_GPD}
    Explicit parametric formulas of the $\mathrm{CRPS}$ exist for most classical distribution families: e.g. Gaussian, logistic, censored logistic, Generalized Extreme Value, Generalized Pareto (see \citetnp{Gneiting_2005}; \citetnp{Taillardat_2016}; \citetnp{Friederichs_2012}). We focus here on the Generalized Pareto Distribution (GPD) family and we denote by  $H_{\xi,\sigma}$ the GP distribution  with shape parameter $\xi\in\mathbb{R}$ and scale parameter $\sigma>0$. Recall that it is defined, when $\xi\neq 0$, by
    \begin{equation*}
        H_{\xi,\sigma}(z)=1-\left(1+\frac{\xi z}{\sigma}\right)_{+}^{-1/\xi}, \quad z>0,
    \end{equation*}
     with the notation  $(\cdot)_+=\max(0,\cdot)$. When $\xi=0$, the standard limit by continuity is used. For $\xi<1$, the GPD has a finite first moment and the associated  $\mathrm{CRPS}$ is given by \citep{Friederichs_2012}
    \begin{align}
        &\mathrm{CRPS}\left(H_{\xi,\sigma},y\right) \label{eq:crps_gpd} \\
        =& \left(y+\cfrac{\sigma}{\xi}\right)\left(2H_{\xi,\sigma}(y)-1\right)-\cfrac{2\sigma}{\xi(\xi-1)}\left(\cfrac{1}{\xi-2}+(1-H_{\xi,\sigma}(y))\left(1+\xi\cfrac{y}{\sigma}\right)\right).\nonumber
    \end{align}
    When $Y\sim H_{\xi^\ast,\sigma^\ast}$, the expected $\mathrm{CRPS}$ is \citep{Taillardat_2022}
        \begin{align}
        &\overline{\mathrm{CRPS}}\left(H_{\xi,\sigma},H_{\xi^\ast,\sigma^\ast}\right) \label{eq:diff_crps_gpd}\\
        =&\cfrac{\sigma^\ast}{1-\xi^\ast}+\cfrac{2\sigma}{1-\xi}m_0+\cfrac{2\xi}{1-\xi}m_1+2\sigma\left(\cfrac{1}{1-\xi}-\cfrac{1}{2(2-\xi)}\right)\nonumber
    \end{align}
    with 
    \begin{equation*}
        m_0=\bbE_{Y\sim H_{\xi^\ast,\sigma^\ast}}\left[\left(1+\cfrac{\xi}{\sigma}Y\right)^{-1/\xi}\right],\quad   m_1=\bbE_{Y\sim H_{\xi^\ast,\sigma^\ast}}\left[Y\left(1+\cfrac{\xi}{\sigma}Y\right)^{-1/\xi}\right].
    \end{equation*}
    In particular,
    \begin{equation*}\label{eq:best_crps_gpd}
        \overline{\mathrm{CRPS}}\left(H_{\xi^\ast,\sigma^\ast},H_{\xi^\ast,\sigma^\ast}\right)=\cfrac{\sigma^\ast}{(2-\xi^\ast)(1-\xi^\ast)}.
    \end{equation*}

    We now consider the distributional regression framework and we illustrate the statement of the Section \ref{subsection:CRPS_distrib} on Bayes risk in the case of a Generalized Pareto regression model where $Y$ given $X=x$ follows a GPD with shape parameter $\xi^\ast(x)$ and scale parameter $\sigma^*(x)$. Then, it is possible to show that Bayes risk is equal to
    \begin{equation*}
        R_P(F^\ast)=\int_{\bbR^d} \cfrac{\sigma^*(x)}{(2-\xi^\ast(x))(1-\xi^\ast(x))}  P_X(\rmd x)
    \end{equation*}
    when $0<\xi^\ast(x)<1$ for all $x\in\bbR^d$. For a forecast in the GPD class, i.e. $F_x$ is a GPD with shape parameter $\xi(x)$ and scale parameter $\sigma(x)$, then the risk $R_P(F)$ is equal to Bayes risk if and only if  $\xi(x) = \xi^\ast(x)$ and $\sigma(x) = \sigma^*(x)$ $P_X$-a.e.\\

    In the GPD regression framework, the conditions of the classes of distributions $\mathcal{D}^{(h,C,M)}$ can be interpreted as conditions on the parameters $\xi^\ast(x)$ and $\sigma^\ast(x)$. Condition $ii)$ is equivalent to $\sigma^*(x)\leq M(2-\xi^\ast(x))(1-\xi^\ast(x))$ when $0<\xi^\ast(x)<1$, for all $x\in[0,1]^d$. The regularity condition $iii)$ holds with constants $C$ and $h$ as soon as $x\mapsto \xi^\ast(x)$ and $x\mapsto \sigma^*(x)$ are both $h$-Hölder.\\
    
    For example, the popular case were the shape parameter $\xi^\ast(x)$ and the scale parameter $\sigma^\ast(x)$ are assumed to be linearly dependent on $x$ (i.e. $\xi^\ast(x)=\xi_0+\xi_1\cdot x$ and $\sigma^\ast(x)=\sigma_0+\sigma_1\cdot x$ with $\xi_1,\sigma_1\in\bbR^d$) is in a class of distributions of Definition \ref{def:classD}.

\section{Conclusion and Discussion}\label{section:conclusion}

    We found that the optimal rate of convergence for distributional regression on $\mathcal{D}^{(h,C,M)}$ is of the same order as the optimal rate of convergence for point regression. Thus, with regard to the sample size $n$, distributional regression evaluated with the $\mathrm{CRPS}$ converges at the same rate as point regression even though the distributional estimate carries more information on the prediction of the underlying process.
    
    We have also shown that the $k$-NN method and the kernel method reach this optimal rate of convergence, respectively in dimension $d\geq2$ and in any dimension. However, these methods are not widely used in practice because of the limitations of their predictive power in moderate or high dimension $d\geq 3$ due to the curse of dimension. An extension of this work could be to study if state-of-the-art techniques reach the optimal rate of convergence obtained in this article. Random Forests \citep{Breiman_2001} methods, such as Quantile Regression Forests \citep{Meinshausen_2006} and Distributional Random Forests \citep{Cevid_2020}, appear to be natural candidates as they are based on a generalized notion of neighborhood and have been subject to recent development in weather forecast statistical postprocessing (see, e.g., \citetnp{Taillardat_2016}).
    
    The results of this article were obtained for the CRPS, which is widely used in practice, but can easily be extended to the weighted $\mathrm{CRPS}$  in its standard uses. The weighted $\mathrm{CRPS}$  is defined as
    \begin{equation*}
    	\mathrm{wCRPS}(F,y)=\int_{\mathbb{R}}(F(z)-\mathds{1}_{y\leq z})^2 w(z)\rmd z
    \end{equation*}
    with $w$ the weight chosen. The weighted $\mathrm{CRPS}$  is used to put the focus of the score in specific regions of the outcome space \citep{Gneiting_2011}. It is used in the study of extreme events by giving more weight to the extreme behavior of the distribution.
    
    Moreover, an interesting development would be to obtain similar results for rate of convergence with respect to different strictly proper scoring rules or metrics, for instance energy scores or Wasserstein distances.\\

    \textit{Acknowledgments:} The authors acknowledge the support of the French Agence Nationale de la Recherche (ANR) under reference ANR-20-CE40-0025-01 (T-REX project) and of the Energy oriented Centre of Excellence-II (EoCoE-II), Grant Agreement 824158, funded within the Horizon2020 framework of the European Union. Part of this work was also supported by the ExtremesLearning grant from 80 PRIME CNRS-INSU and the ANR project Melody (ANR-19-CE46-0011).

\bibliography{refs.bib}

\appendix

\section{Proof of Proposition \ref{pp:upper_bound_knn}}\label{appendix:1}
For the simplicity of notation, we write simply $\bbE$ for the expectation  with respect to  $(X,Y)\sim P$ and $D_n\sim P^n$. The context makes it clear enough so as to avoid confusion.
\begin{proof}
	Recall that for the CRPS, the excess risk is equal to
	\begin{equation}
		\bbE[R_P(\hat F_{n})]-R_P(F^\ast) = \bbE\left[\int_\bbR |\hat F_{n,X}(z)-F^\ast_{X}(z)|^2 \rmd z\right].\label{eq:prop4-1}
	\end{equation}
	We first estimate $\bbE[|\hat F_{n,x}(z)-F^\ast_{x}(z)|^2]$ for fixed $x\in[0,1]^d$ and $z\in\mathbb{R}$. Denote by $X_{1:n}(x),\cdots,X_{k_n:n}(x)$ the nearest neighbors of $x$ and by $Y_{1:n}(x),\ldots,Y_{k_n:n}(x)$ the associated values of the response variable. Conditionally on $X_{i:n}(x)=x_i$, $1\leq i\leq k_n$, the random variables $Y_{i:n}(x)$, $1\leq i\leq k_n$, are independent and with distribution $F^*_{x_i}$, $1\leq i\leq k_n$. This implies that, conditionally, $\hat F_{n,x}(z)$ is the average of the $k_n$ independent random variables $\mathds{1}_{\{Y_{i:n}(x)\leq z\}}$ that have a Bernoulli distribution 
	with parameter $F^*_{x_i}(z)$. Therefore, the conditional bias and variance are given by
	\begin{align*}
		\bbE[\hat F_{n,x}(z)-F^\ast_x(z)\mid X_i(x)=x_i,1\leq i\leq k_n]=\frac{1}{k_n}\sum_{i=1}^{k_n} \left(F^\ast_{x_{i}}(z)-F^\ast_x(z)\right)\\
		\mathrm{Var}[\hat F_{n,x}(z)\mid X_i(x)=x_i,1\leq i\leq k_n]=\frac{1}{k_n^2}\sum_{i=1}^{k_n}F^\ast_{x_{i}}(z)(1-F^\ast_{x_{i}}(z)).
	\end{align*}
	Adding up the squared conditional bias and variance  and integrating with respect to $X_{i:n}(x)$, $1\leq i\leq k_n$,  we obtain the mean squared error
	\begin{align*}
		&\quad \bbE\big[|\hat F_{n,x}(z)-F^\ast_{x}(z)|^2\big]\\
		& =\bbE\Big[\Big(\frac{1}{k_n}\sum_{i=1}^{k_n} \big(F^\ast_{X_{i:n}(x)}(z)-F^\ast_x(z)\big)\Big)^2\Big]+\frac{1}{k_n^2}\sum_{i=1}^{k_n}\bbE\Big[ F^\ast_{X_{i:n}(x)}(z)(1-F^\ast_{X_{i:n}(x)}(z))\Big].
	\end{align*}
	Using Jensen's inequality and integrating with respect to $P_X(\rmd x)\rmd z$, we deduce that the excess risk~\eqref{eq:prop4-1} satisfies
	\begin{align*}
		\bbE[R_P(\hat F_{n})]-R_P(F^\ast)  &\leq \frac{1}{k_n}\sum_{i=1}^{k_n} \bbE\left[ \int_\bbR (F^\ast_{X_{i:n}(X)}(z)-F^\ast_X(z))^2\rmd z\right] \\
		&\qquad +\frac{1}{k_n^2}\sum_{i=1}^{k_n}\bbE \left[ \int_\bbR F^\ast_{X_{i:n}(X)}(z)(1-F^\ast_{X_{i:n}(X)})\rmd z\right].
	\end{align*}
	Using conditions $ii)$ and $iii)$ in the definition of the class $\mathcal{D}^{(h,C,M)}$ to bound from above the first and second term respectively, we get
	\begin{align*}
		\bbE[R_P(\hat F_{n})]-R_P(F^\ast)  &\leq  \frac{C^2}{k_n}\sum_{i=1}^{k_n} \bbE\big[\| X_{i:n}(X)-X \|^{2h}\big] +\frac{M}{k_n}\\
		&\leq  C^2\bbE\big[\| X_{k_n:n}(X)-X \|^{2h}\big]	+\frac{M}{k_n},
	\end{align*}
	where the last inequality uses the fact that, by definition of nearest neighbors, the distances $\| X_{i:n}(X)-X \|$, $1\leq i\leq k_n$, are non-increasing.
	
	The last step of the proof is to use Theorem 2.4 from \citet{Biau_2015} stating that
	\begin{equation*}
		\bbE[\| X_{k_n:n}(X)-X \|^2] \leq \begin{cases}
			8 \cfrac{k_n}{n} & \mbox{if } d=1,\\
			c_d \left(\cfrac{k_n}{n}\right)^{2/d} & \mbox{if } d\geq 2.
		\end{cases}
	\end{equation*}	
	Together with the concavity inequality (as $h\in (0,1]$)
	\begin{equation*}
		\bbE[\| X_{k_n:n}(X)-X \|^{2h}]\leq \bbE[\| X_{k_n:n}(X)-X \|^{2}]^h,
	\end{equation*}
	we deduce
	\begin{equation*}
		\bbE[R_P(\hat F_{n})]-R_P(F^\ast) \leq \begin{cases}
			C^2 8^{h} \left(\cfrac{k_n}{n}\right)^{h}+\cfrac{M}{k_n}& \mbox{if } d=1,\\
			C^2 {c_d}^{h} \left(\cfrac{k_n}{n}\right)^{2h/d}+\cfrac{M}{k_n}& \mbox{if } d\geq 2,			
		\end{cases}
	\end{equation*}
	concluding the proof of Proposition~\ref{pp:upper_bound_knn}.
\end{proof}

\section{Proof of Proposition~\ref{pp:upper_bound_kernel}}\label{appendix:2}
\begin{proof}
	Equation~\eqref{eq:kernel-model} can be rewritten as
	\[
	\hat F_{n,x}(z)= \cfrac{\sum_{i=1}^{n}  \mathds{1}_{\{X_i\in S_{x,h_n}\}}\mathds{1}_{\{Y_i \leq z\}}}{nP_n(S_{x,h_n})},
	\]
	with $S_{x,\epsilon}$ the closed ball centered at $x$ of radius $\epsilon>0$ and 
	\[
	P_n(\cdot)=\frac{1}{n}\sum_{i=1}^{n}\mathds{1}_{\{X_i\in \cdot\}}
	\]
	the empirical measure corresponding to $X_1,\dots,X_n$. Recall that we use the estimator $\hat F_n(x)=\frac{1}{n}\sum_{i=1}^{n}\mathds{1}_{\{Y_i \leq z\}}$ when $nP_n(S_{x,h_n})=0$.

	Similarly as in the proof of the Proposition \ref{pp:upper_bound_knn}, a bias/variance decomposition of the squared error yields
	\begin{align*}
		&\quad \bbE\big[|\hat F_{n,x}(z)-F^\ast_{x}(z)|^2\big]\\
		& =\bbE\left[\left( \frac{\sum_{i=1}^{n} \big(F^\ast_{X_{i}(x)}(z)-F^\ast_x(z)\big) \mathds{1}_{\{X_i\in S_{x,h_n}\}}}{nP_n(S_{x,h_n})} \right)^2 \mathds{1}_{\{nP_n(S_{x,h_n})>0\}}\right]\\
		&\quad +\bbE\left[\frac{\sum_{i=1}^{n} F^\ast_{X_{i}}(z)(1-F^\ast_{X_{i}}(z))\mathds{1}_{\{X_i\in S_{x,h_n}\}}}{(nP_n(S_{x,h_n}))^2}\mathds{1}_{\{nP_n(S_{x,h_n})>0\}}\right]\\
		&\quad+\bbE\left[\left(\frac{1}{n}\sum_{i=1}^n\mathds{1}_{\{Y_i\leq z\}}-F^\ast_x(z)\right)^2 \mathds{1}_{\{nP_n(S_{x,h_n})=0\}}\right]\\
		& := A_1(z)+A_2(z)+A_3(z).
	\end{align*}
	The excess risk at $X=x$ is thus decomposed into three terms
	\[
	\bbE\left[\int_\bbR|\hat F_{n,x}(z)-F^\ast_{x}(z)|^2\rmd z\right]=\int_\bbR A_1(z)\rmd z +\int_\bbR A_2(z)\rmd z+\int_\bbR A_3(z)\rmd z
	\]
	that we analyze successively.
	
	The first term (bias) is bounded from above using Jensen's inequality and property $iii)$ of $\mathcal{D}^{(h,C,M)}$:
	\begin{align*}
		\int_\bbR A_1(z) \rmd z &\leq \bbE\left[ \frac{\sum_{i=1}^{n} \int_\bbR \big(F^\ast_{X_{i}(x)}(z)-F^\ast_x(z)\big)^2 \rmd z \mathds{1}_{\{X_i\in S_{x,h_n}\}}}{nP_n(S_{x,h_n})} \mathds{1}_{\{nP_n(S_{x,h_n})>0\}}\right]\\
		&\leq \bbE\left[ \frac{\sum_{i=1}^{n} C^2 \| X_i-x\|^{2h}
			\mathds{1}_{\{X_i\in S_{x,h_n}\}}}{nP_n(S_{x,h_n})} \mathds{1}_{\{nP_n(S_{x,h_n})>0\}}\right]\\
		& \leq C^2 {h_n}^{2h}. 
	\end{align*}
	
	The second term (variance) is bounded using property $ii)$ of $\mathcal{D}^{(h,C,M)}$ and an elementary result for the binomial distribution:
	\begin{align*}
		\int_\bbR A_2(z) \rmd z &= \bbE\left[\frac{\sum_{i=1}^{n} \int_\bbR F^\ast_{X_{i}}(z)(1-F^\ast_{X_{i}}(z))\rmd z\mathds{1}_{\{X_i\in S_{x,h_n}\}}}{(nP_n(S_{x,h_n}))^2}\mathds{1}_{\{nP_n(S_{x,h_n})>0\}}\right]  \\
		&\leq M \bbE\left[\frac{\mathds{1}_{\{nP_n(S_{x,h_n})>0\}}}{nP_n(S_{x,h_n})} \right]\\
		&\leq \frac{2M}{nP_X(S_{x,h_n})}.
	\end{align*}
	In the last line, we use  that $Z=nP_n(S_{x,h_n})$  follows a binomial distribution with parameters $n$ and $p=P_X(S_{x,h_n})$ so that $\bbE\left[\frac{1}{Z}\mathds{1}_{\{Z>0\}}\right]\leq \frac{2}{(n+1)p}$, see  Lemma 4.1 in \citet{Gyorfi_2002}.
	
	The last term is a remainder term and is bounded by
	\begin{align*}
		\int_\bbR A_3(z) \rmd z 
		&\leq \bbE\left[\frac{1}{n}\sum_{i=1}^n \int_\bbR\left(F^\ast_{X_i}(z)-F^\ast_x(z)\right)^2\rmd z \mathds{1}_{\{nP_n(S_{x,h_n})=0\}}\right]\\
		&+ \bbE\left[\frac{1}{n^2}\sum_{i=1}^n\int_\bbR F^\ast_{X_i}(z)(1-F^\ast_{X_i}(z))\rmd z \mathds{1}_{\{nP_n(S_{x,h_n})=0\}}\right].
	\end{align*}
	Properties $ii)$ and $iii)$ of $\mathcal{D}^{(h,C,M)}$ and the fact that $\lVert X_i-x\rVert\leq\sqrt{d}$ imply
	\begin{align*}
		\int_\bbR  A_3(z) \rmd z &\leq \left(C^2d^h+\frac{M}{n}\right)\bbE\left[\mathds{1}_{\{nP_n(S_{x,h_n})=0\}}\right]\\
		&\leq \left(C^2d^h+\frac{M}{n}\right) e^{-nP_{X}(S_{x,h_n})}.
	\end{align*}
	For the second inequality, we use that  $\mathbb{P}(Z=0)=(1-p)^n\leq e^{-np}$ where $Z=nP_n(S_{x,h_n})$  follows a binomial distribution with parameters $n$ and $p=P_X(S_{x,h_n})$ .
	
	Collecting the three terms, we obtain the following upper bound for the excess risk at $X=x$:
	\begin{equation*}
	    \bbE\left[\int_\bbR |\hat F_{n,x}(z)-F^\ast_{x}(z)|^2  \rmd z\right] \leq C^2 {h_n}^{2h}+\frac{2M}{nP_X(S_{x,h_n})}+\left(C^2d^h+\frac{M}{n}\right) e^{-nP_{X}(S_{x,h_n})}.
	\end{equation*}
	
	We finally integrate this bound with respect to $P_X(\rmd x)$. According to  Equation (5.1) in \cite{Gyorfi_2002}, there exists a constant $\tilde{c}_d$ depending only on $d$ such that
	\begin{equation*}
		\int_{[0,1]^d} \frac{1}{nP_X(S_{x,h_n})} P_{X}(\rmd x)\leq \frac{\tilde{c}_d}{nh_n^d}.
	\end{equation*}
	Note that $\tilde{c}_d$ can be chosen as $\tilde{c}_d=d^{d/2}$. We also have
	\begin{align*}
		\int_{[0,1]^d}   e^{-nP_{X}(S_{x,h_n})} P_{X}(\rmd x) &\leq \max_{u\geq 0} ue^{-u} \int_{[0,1]^d} \frac{1}{nP_X(S_{x,h_n})}P_{X}(\rmd x)\\
		&\leq \frac{\tilde{c}_d}{nh_n^d} .
	\end{align*}
	We obtain thus
	\begin{align*}
		\bbE[R_P(\hat F_{n})]-R_P(F^\ast) &= \bbE\left[\int_\bbR |\hat F_{n,x}(z)-F^\ast_{x}(z)|^2  \rmd z\right] \\
		& \leq C^2 {h_n}^{2h}+\tilde{c}_d \frac{2M+C^2d^h+\frac{M}{n}}{n{h_n}^d} .
	\end{align*}
\end{proof}

\section{Proof of Proposition~\ref{pp:lower_bound}}\label{appendix:3}

The proof of  Proposition~\ref{pp:lower_bound} relies on the next two elementary lemmas. The first one states that for a binary outcome $Y\in\{0,L\}$, forecasters should focus on binary forecast $F\in\mathcal{M}(\{0,L\})$ only, which is very natural. More precisely, any predictive distribution $F\in\mathcal{M}(\bbR)$ can be associated with  $F\in\mathcal{M}(\{0,L\})$ with a better expected $\mathrm{CRPS}$.
\begin{lemma}\label{lemma:01} 
	Let $G\in\mathcal{M}(\{0,L\})$. For $F\in \mathcal{M}(\bbR)$, the distribution 
	\begin{equation*}
		\tilde{F}(z)=(1-m)\mathds{1}_{0\leq z}+m\mathds{1}_{L\leq z}\ \mbox{with } m=\frac{1}{L}\int_0^L (1-F(z)) \rmd z
	\end{equation*}
	satisfies 
	\begin{equation*}
		\overline{\mathrm{CRPS}} (\tilde F,G) \leq \overline{\mathrm{CRPS}}(F,G).
	\end{equation*}
\end{lemma}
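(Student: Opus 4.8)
The plan is to invoke the propriety identity \eqref{eq:div_CRPS} to turn each expected score into a squared $L^2$-distance to $G$, and then to reduce the inequality to one application of Jensen's inequality on $[0,L)$. Write $g\in[0,1]$ for the mass that $G$ puts at $L$, so that $G(z)=(1-g)\mathds{1}_{0\le z}+g\mathds{1}_{L\le z}$; in particular $\overline{\mathrm{CRPS}}(G,G)=\int_\bbR G(z)(1-G(z))\,\rmd z=Lg(1-g)$ is finite. If $\overline{\mathrm{CRPS}}(F,G)=+\infty$ there is nothing to prove, so assume it finite. Using \eqref{eq:div_CRPS} (valid here as an identity of extended reals since $G$ is compactly supported) both for $F$ and for the binary $\tilde F$, we get
\[
\overline{\mathrm{CRPS}}(F,G)-\overline{\mathrm{CRPS}}(\tilde F,G)=\int_\bbR\big(F(z)-G(z)\big)^2\,\rmd z-\int_\bbR\big(\tilde F(z)-G(z)\big)^2\,\rmd z ,
\]
so it suffices to show $\int_\bbR(\tilde F(z)-G(z))^2\,\rmd z\le\int_\bbR(F(z)-G(z))^2\,\rmd z$.

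For this, note that $\tilde F$ and $G$ agree outside $[0,L)$: both are $0$ on $\{z<0\}$ and $1$ on $\{z\ge L\}$. On $[0,L)$ we have $\tilde F(z)=1-m$ and $G(z)=1-g$, so the left-hand integral equals $\int_0^L\big((1-m)-(1-g)\big)^2\,\rmd z=L\,(g-m)^2$. In the right-hand integral I would keep only the $[0,L)$-part, discarding the non-negative contributions from $\{z<0\}$ and $\{z\ge L\}$; since $G(z)=1-g$ there, this part is $\int_0^L\big(F(z)-(1-g)\big)^2\,\rmd z$. Applying Jensen's inequality for $t\mapsto t^2$ to $z\mapsto F(z)-(1-g)$ under the uniform probability measure on $[0,L)$,
\[
\frac1L\int_0^L\big(F(z)-(1-g)\big)^2\,\rmd z\ \ge\ \Big(\frac1L\int_0^L\big(F(z)-(1-g)\big)\,\rmd z\Big)^2=(g-m)^2 ,
\]
where the last equality uses $\frac1L\int_0^L F(z)\,\rmd z=1-\frac1L\int_0^L(1-F(z))\,\rmd z=1-m$, i.e.\ the very definition of $m$. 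Multiplying by $L$ and combining the three observations gives $\int_\bbR(\tilde F(z)-G(z))^2\,\rmd z=L(g-m)^2\le\int_0^L(F(z)-(1-g))^2\,\rmd z\le\int_\bbR(F(z)-G(z))^2\,\rmd z$, hence $\overline{\mathrm{CRPS}}(\tilde F,G)\le\overline{\mathrm{CRPS}}(F,G)$.

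I do not expect a genuine obstacle: the argument is elementary once one spots that replacing $F$ by $\tilde F$ changes the predictive distribution only on $[0,L)$, where $\tilde F$ is exactly the constant equal to the average of $F$ selected by the definition of $m$, and that $\overline{\mathrm{CRPS}}(\cdot,G)$ is, up to the additive constant $\overline{\mathrm{CRPS}}(G,G)$, the squared $L^2$-distance to $G$. The only mildly delicate point is a forecast $F$ with infinite first moment; this is handled either by the trivial case $\overline{\mathrm{CRPS}}(F,G)=+\infty$ or, if one prefers to avoid \eqref{eq:div_CRPS} in that regime, by the pointwise identity $(1-g)(F(z)-\mathds{1}_{0\le z})^2+g(F(z)-\mathds{1}_{L\le z})^2=(F(z)-G(z))^2+G(z)(1-G(z))$, which makes the decomposition above legitimate termwise.
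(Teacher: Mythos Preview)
Your proof is correct and follows essentially the same route as the paper: both arguments discard the contribution outside $[0,L)$ and then use that $1-m$ is the average of $F$ on $[0,L)$, so Jensen's inequality (equivalently, the mean minimizes the squared $L^2$-distance to a constant) yields the result. The only cosmetic difference is that you first invoke \eqref{eq:div_CRPS} to rewrite the excess $\overline{\mathrm{CRPS}}$ as an $L^2$-distance to $G$, whereas the paper applies the mean-value inequality directly to $\int_0^L(F(z)-\mathds{1}_{y\le z})^2\,\rmd z$ for each $y\in\{0,L\}$ before integrating in $G(\rmd y)$.
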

\begin{proof}
	Let $F\in\mathcal{M}(\mathbb{R})$ and $G\in\mathcal{M}(\{0,L\})$. We have
	\begin{align*}
		\overline{\mathrm{CRPS}} (F,G) &= \int_\bbR \int_\bbR (F(z)-\mathbbm{1}_{y\leq z})^2 \rmd z G(\rmd y)\\
		&\geq \int_\bbR \int_0^L (F(z)-\mathbbm{1}_{y\leq z})^2 \rmd z G(\rmd y)
	\end{align*}
	Because $1-m$ is the mean value of $F$ on $[0,L]$, we have for $y\in\{0,L\}$
	\begin{equation*}
		\int_0^L (F(z)-\mathbbm{1}_{y\leq z})^2 \rmd z
		\geq \int_0^L ((1- m)-\mathbbm{1}_{y\leq z})^2 \rmd z.  
	\end{equation*}
	Integrating with respect to $G(\rmd y)$, we deduce
	\begin{equation*}
		\overline{\mathrm{CRPS}}(F,G)\geq \int_\bbR \int_0^L ((1- m)-\mathbbm{1}_{y\leq z})^2 \rmd z G(\rmd y).
	\end{equation*}
	The right-hand side equals $\overline{\mathrm{CRPS}}(\tilde F,G)$ and we conclude
	\begin{equation*}
		\overline{\mathrm{CRPS}}(F,G)\geq \overline{\mathrm{CRPS}}(\tilde F,G).
	\end{equation*}
\end{proof}

Lemma \ref{lemma:02} shows that for binary outcome and predictions, the $\mathrm{CRPS}$ reduces to a quantity proportional to the Brier score (\citetnp{Brier_1950})
\begin{equation*}
	\mathrm{Brier}(p,y)=(y-p)^2,\quad y\in\{0,1\}, p\in[0,1],
\end{equation*}
which is closely related to the mean squared error used in regression.

\begin{lemma}\label{lemma:02}
	For all $y\in\{0,L\}$ and $F(z)=(1-p)\mathds{1}_{0\leq z}+p\mathds{1}_{L\leq z}\in\mathcal{M}(\{0,L\})$ with $p\in[0,1]$, it holds
	\begin{equation*}
		\mathrm{CRPS}(F,y)=L\mathrm{Brier}(p,\frac{y}{L})=L(\frac{y}{L}-p)^2.
	\end{equation*}
\end{lemma}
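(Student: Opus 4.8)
The plan is to compute the defining integral $\mathrm{CRPS}(F,y)=\int_{\mathbb{R}}(F(z)-\mathds{1}_{y\leq z})^2\,\rmd z$ directly, exploiting that both $F$ and the map $z\mapsto\mathds{1}_{y\leq z}$ are step functions whose only jumps sit at $0$ and $L$. Indeed $F(z)=0$ for $z<0$, $F(z)=1-p$ for $0\leq z<L$, and $F(z)=1$ for $z\geq L$; and since $y\in\{0,L\}$, the indicator $\mathds{1}_{y\leq z}$ also equals $0$ for $z<0$ and $1$ for $z\geq L$. Hence the integrand $(F(z)-\mathds{1}_{y\leq z})^2$ vanishes on $(-\infty,0)$ and on $[L,\infty)$, so the integral reduces to $\int_0^L (F(z)-\mathds{1}_{y\leq z})^2\,\rmd z$.

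On the interval $[0,L)$ the function $F$ is the constant $1-p$, while $\mathds{1}_{y\leq z}$ equals $1$ when $y=0$ and $0$ when $y=L$; in both cases this can be written uniformly as $1-y/L$. Therefore, for $z\in[0,L)$,
\[
F(z)-\mathds{1}_{y\leq z}=(1-p)-(1-y/L)=\frac{y}{L}-p,
\]
so the integrand is the constant $(y/L-p)^2$ on an interval of length $L$. Integrating gives $\mathrm{CRPS}(F,y)=L\,(y/L-p)^2$, which is exactly $L\,\mathrm{Brier}(p,y/L)$ by the definition of the Brier score, completing the argument.

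There is essentially no obstacle here; the only point needing care is the verification that the integrand vanishes identically outside $[0,L]$, which uses in an essential way that $y$ coincides with one of the two breakpoints $0$ or $L$ of $F$ — it is precisely this alignment that makes the $\mathrm{CRPS}$ collapse to a multiple of the Brier score. Alternatively, one may simply split into the cases $y=0$ and $y=L$ and evaluate $\int_0^L p^2\,\rmd z=Lp^2$ and $\int_0^L (1-p)^2\,\rmd z=L(1-p)^2$ separately, obtaining the same conclusion.
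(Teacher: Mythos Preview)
Your proof is correct and follows essentially the same route as the paper: both reduce the CRPS integral to $\int_0^L (1-p-\mathds{1}_{y\leq z})^2\,\rmd z$ and evaluate it as $Lp^2$ or $L(1-p)^2$ according to whether $y=0$ or $y=L$. Your unified rewriting $\mathds{1}_{y\leq z}=1-y/L$ on $[0,L)$ is a minor cosmetic streamlining of the same computation.
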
	
\begin{proof}
	We compute 
	\begin{align*}
		\mathrm{CRPS}(F,y)&=\int_0^L (1-p-\mathds{1}_{y\leq z})^2 \rmd z\\
		&= \left\{\begin{array}{ll} 
			L p^2 & \mbox{if y=0} \\
			L(1-p)^2 & \mbox{if y=L}
		\end{array}\right..
	\end{align*}
	In both cases, this equals  $L(\frac{y}{L}-p)^2=L\mathrm{Brier}(p,\frac{y}{L})$.
\end{proof}

\begin{proof}[Proof of Proposition~\ref{pp:lower_bound}]
	Since only binary outcomes are considered in the class $\mathcal{B}^{(h,C,L)}$, Lemma~\ref{lemma:01} implies that 
	\begin{equation*}
		\inf_{\hat F_n} \sup_{P\in \mathcal{B}^{(h,C,L)}} \Big\{\bbE[R_P(\hat F_n)]-R_P(F^\ast)\Big\} = \inf_{\tilde F_n} \sup_{P\in \mathcal{B}^{(h,C,L)}} \Big\{\bbE[R_P(\tilde F_n)]-R_P(F^\ast)\Big\}
	\end{equation*}
	where the infimum are taken  over models $\hat F_n$ and $\tilde F_n$ trained on the first observations $(X_i,Y_i)_{1\leq i\leq n}$ and with values  in $\mathcal{M}(\mathbb{R})$ and 
	$\mathcal{M}(\{0,L\})$, respectively. Indeed, the left-hand side is a priori smaller since the family $\hat F_n$ is larger but Lemma~\ref{lemma:01} ensures that each model $\hat F_n$ can be associated with  a model $\tilde F_n$ with equal or lower expected score.
	
	We then apply Lemma~\ref{lemma:02}. For a binary outcome, the conditional distribution of $Y$ given $X=x$ writes
	\begin{equation*}
	    F^\ast_x(z)=(1-m(x))\mathds{1}_{0\leq z}+m(x)\mathds{1}_{L\leq z},
	\end{equation*} 
	and the model $\tilde F_n$ with values in $\mathcal{M}(\{0,L\})$ takes the form 
	\begin{equation*}
		\tilde F_{n,x}(z)=(1-m_n(x))\mathds{1}_{0\leq z}+m_n(x)\mathds{1}_{L\leq z},
	\end{equation*}
	with $m(x)=\frac{1}{L}\int_0^L (1-F^\ast_x(z)) \rmd z$ and $m_n(x)=\frac{1}{L}\int_0^L (1-\hat F_{n,x}(z)) \rmd z$.\\
	Then Lemma~\ref{lemma:02} implies	
	\begin{align*}
		\bbE[R_P(\hat F_n)]-R_P(F^\ast) &= \bbE\left[\mathrm{CRPS}(\hat F_{n,X},Y)-\mathrm{CRPS}(F^*_X,Y)\right]\\
		&= L\bbE\left[(Y/L-m_{n}(X))^2-(Y/L-m(X))^2 \right]\\
		&= L\bbE\left[(m_n(X)-m(X))^2\right],
	\end{align*}
	which corresponds to the excess risk in regression with squared error loss. The property $iii)$ of $\mathcal{B}^{(h,C,L)}$ is equivalent to 
	\begin{equation*}
		|m(x)-m(x')|^h\leq C\|x-x'\|^h,\quad x\in[0,1]^d,
	\end{equation*}
	which is the standard regularity assumption on the regression function $m$. Using the result of the Problem 3.3 in \citet{Gyorfi_2002} dealing with binary models, we finally obtain that the sequence $a_n=n^{-\frac{2h}{2h+d}}$ is a lower minimax rate of convergence for this class of distributions and more precisely that Equation~\eqref{eq:lower_bound_precise} holds.
\end{proof}

\end{document}